\title{Using the subspace theorem to bound unit distances}
\author{Ryan Schwartz}
\newtheorem{theorem}{Theorem} 
\newtheorem{lemma}[theorem]{Lemma}     
\begin{document}
\maketitle

\begin{abstract}
We prove a special case of Erd\H{o}s' unit distance problem using
a corollary of the subspace theorem bounding the number of solutions of linear 
equations from a multiplicative group.  We restrict our attention to unit 
distances coming from a multiplicative group of rank $r$ not too large.  
Specifically, given $\varepsilon>0$ and $n$ points in the plane, we construct 
the unit distance graph from these points and distances and use the corollary 
above to bound certain paths of length $k$ in the graph giving at most 
$n^{1+\varepsilon}$ unit distances from the group above.  We require that the 
rank $r\le c\log n$ for some $c>0$ depending on $\varepsilon$.  This extends a 
result of J\'ozsef Solymosi, Frank de Zeeuw and the author where we only 
considered unit distances that are roots of unity.  Lastly we show that the 
lower bound configuration for the unit distance problem of Erd\H{o}s consists of 
unit distances from a multiplicative subgroup of the form above.

\end{abstract}

\section{Introduction}
In 1946 Erd\H os asked for the maximal possible number, $u(n)$, of unit 
distances among $n$ points in the plane.  He gave the lower bound $u(n)>
n^{1+c/\log\log n}$, using a $\sqrt{n}\times \sqrt{n}$ grid and conjectured that 
this was the true magnitude \cite{Erdo46}.  The best known upper bound is 
$u(n)<cn^{4/3}$, first proved by Spencer, Szemer\'edi and Trotter in 1984 
\cite{Spen84}.  This bound has many proofs, the simplest of which was the
proof by Sz\'ekely \cite{Szek97}, using the crossing inequality for graphs.  As 
a general reference for work done on the unit distance problem see 
\cite{Bras06}.

In \cite{Schw12a} J\'ozsef Solymosi, Frank de Zeeuw and the author showed that 
the number of unit distances between $n$ points in the plane with angle to the 
$x$-axis a rational multiple of $\pi$ is at most $n^{1+c/\sqrt{\log n}}$.  Such 
unit distances correspond to roots of unity.  The bound was found by using a 
result of H.B.~Mann bounding sums of roots of unity \cite{Mann65}.  This work is 
an extension of that result to unit distances from a group of finite 
rank---roots of unity correspond to rank 0.  The proof follows in almost the 
same way except we use the subspace theorem instead of Mann's result.

Our main tool is a corollary of the subspace theorem.  The subspace theorem was 
first proved by W.M.~Schmidt in 1972 \cite{Schm72}.  This theorem essentially 
says that solutions of linear equations, in a multiplicative subgroup of a field 
of finite rank, come from a finite number of linear subspaces.  A number of 
improvements, including quantitative versions, of this result have been made.  
The corollary we use bounds the number of such solutions depending on the rank 
and dimension.  This corollary was due originally to Evertse, Schlickewei and 
Schmidt \cite{Ever02}.  We will use an improvement of this result of Amoroso and 
Viada \cite{Amor09}.  The progression of these results is given in 
\cite{Ever02,Ever99}.  We will use the bound from \cite{Amor09} which is, to our 
knowledge, the best known bound for the corollary.  The subspace theorem is a 
very powerful result with a wealth of applications in number theory.  For some 
examples see \cite{Bilu07}. 

Consider two points $p,q\in\mathbb{R}^2$ with unit distance.  Considering the 
vector between these two points we get the complex number 
$z=z(p,q)=\overrightarrow{pq}$ with $|z|=1$.  We will restrict our attention to 
unit distances with $z$ coming from a multiplicative subgroup of $\mathbb{C}^*$ 
(the multiplicative group of nonzero complex numbers) of finite rank.  A 
subgroup $\Gamma\subset\mathbb{C}^*$ has rank $r$ if there exists a finitely 
generated subgroup $\Gamma_0\subset\Gamma$ with $r$ generators such that for 
every $x\in\Gamma$ there exists an integer $k\ge 0$ such that $x^k\in\Gamma_0$.

Suppose $\Gamma$ is a subgroup of $\mathbb{C}^*$ of finite rank $r$ and 
$a_1,a_2,\dots,a_k\in\mathbb{C}^*$.  A solution of the equation 
$a_1z_1+a_2z_2+\dots+a_kz_k=1$ is called \emph{nondegenerate} if no subsum of 
the left hand side vanishes.  That is $\sum_{j\in J} a_jz_j\ne 0$ for every 
nontrivial $J\subset\{1,2,\dots, k\}$.  We will consider the number $A(k,r)$ of 
nondegenerate solutions of this equation with $z_i\in\Gamma$.  We now give the 
corollary of the subspace theorem that we need.
\begin{theorem}
   \label{thm:subspace}
   Suppose $a_1, a_2, \dots, a_k\in\mathbb{C}^*$ and $\Gamma$ has finite rank 
   $r$.  Then the number of nondegenerate solutions of the equation 
   \begin{equation}
      a_1z_1+a_2z_2+\dots+a_kz_k=1 \label{eq:sum}
   \end{equation}
   with $z_i\in\Gamma$ is at most \[A(k,r)\le (8k)^{4k^4(k+kr+1)}.\]
\end{theorem}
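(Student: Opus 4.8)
The plan is to obtain Theorem~\ref{thm:subspace} as a direct consequence of the quantitative subspace theorem in the explicit form due to Amoroso and Viada \cite{Amor09} (sharpening Evertse, Schlickewei and Schmidt \cite{Ever02}). Recall the shape of that result: if $K$ is a field of characteristic zero, $b_1,\dots,b_n\in K^*$, and $G\subseteq(K^*)^n$ is a subgroup of rank $\rho$, then the equation $b_1x_1+\dots+b_nx_n=1$ has at most $(8n)^{4n^4(n+\rho+1)}$ solutions $(x_1,\dots,x_n)\in G$ in which no subsum of the left-hand side vanishes. Our task is therefore to match the data of \eqref{eq:sum} to this statement and to bound the rank of the group that appears.

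First I would take $K=\mathbb{C}$, which has characteristic zero, so the cited bound applies with no further work. (If one prefers to invoke a version stated over $\overline{\mathbb{Q}}$, the reduction to $\mathbb{C}$ is the standard specialization argument: any finite set of solutions, together with the $a_i$ and a set of generators of $\Gamma_0$, lies in a finitely generated subring of $\mathbb{C}$, which can be specialized into $\overline{\mathbb{Q}}$ so as to preserve distinctness of the solutions, non-vanishing of the relevant subsums, and the rank of the group.) The coefficients $a_1,\dots,a_k$ are arbitrary elements of $\mathbb{C}^*=K^*$, exactly as permitted, and a tuple $(z_1,\dots,z_k)$ with every $z_i\in\Gamma$ solving \eqref{eq:sum} is precisely a point of the subgroup $G:=\Gamma\times\dots\times\Gamma\subseteq(\mathbb{C}^*)^k$ ($k$ factors), with ``nondegenerate'' in the sense of the paper coinciding with ``no vanishing subsum''. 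It then remains to verify $\operatorname{rank}(G)\le kr$. Choosing a finitely generated $\Gamma_0\subseteq\Gamma$ with $r$ generators such that every element of $\Gamma$ has a positive power in $\Gamma_0$, put $G_0:=\Gamma_0\times\dots\times\Gamma_0\subseteq G$; this subgroup is generated by the $kr$ tuples that carry one of the $r$ generators of $\Gamma_0$ in a single coordinate and the identity in the others, and for any $(z_1,\dots,z_k)\in G$, picking $m_i\ge 1$ with $z_i^{m_i}\in\Gamma_0$ and raising to the power $m=\operatorname{lcm}(m_1,\dots,m_k)$ gives $(z_1,\dots,z_k)^m\in G_0$. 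Hence $G$ has rank at most $kr$.

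Applying the Amoroso--Viada bound with $n=k$, $\rho=kr$, $b_i=a_i$, $K=\mathbb{C}$ and $G=\Gamma^k$ now yields $A(k,r)\le(8k)^{4k^4(k+kr+1)}$, which is the asserted bound. There is no genuinely hard new step here: the single deep ingredient is the quantitative subspace theorem packaged in \cite{Amor09}, which we use as a black box, and the remainder is the bookkeeping above together with the elementary fact that a $k$-fold direct product of a rank-$r$ group has rank at most $kr$. The only point requiring a little care is to confirm that the ``no vanishing subsum'' condition is the same hypothesis on both sides of the reduction, which it is by the definition of nondegeneracy given above.
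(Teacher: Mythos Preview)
Your derivation is correct and matches the paper's treatment: the paper does not prove Theorem~\ref{thm:subspace} at all but simply quotes it as the Amoroso--Viada bound \cite{Amor09} specialized to $\mathbb{C}$, and your reduction (taking $G=\Gamma^k\subseteq(\mathbb{C}^*)^k$ of rank at most $kr$ and reading off the exponent $4k^4(k+kr+1)$) is exactly how that specialization is obtained.
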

Theorem~\ref{thm:subspace} was proved over an arbitrary algebraically closed 
field $K$ of characteristic $0$ but we only require it over $\mathbb{C}$.

We will use this to prove the following result.
\begin{theorem}
   \label{thm:main}
   Let $\varepsilon>0$.  Then there exist $n_0=n_0(\varepsilon)$ a positive 
   integer and $c=c(\varepsilon)>0$ such that given $n>n_0$ points in the plane, 
   the number of unit distances with $z$ coming from a subgroup $\Gamma \subset 
   \mathbb{C}^*$ with rank $r<c\log n$ is at most $ n^{1+\varepsilon}$.
\end{theorem}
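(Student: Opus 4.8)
The plan is to recast the problem as a path-counting statement and feed one pigeonhole class into Theorem~\ref{thm:subspace}. First I would form the unit distance graph $G$ on the given $n$ points, joining $p$ and $q$ exactly when $|p-q|=1$ and the vector $\overrightarrow{pq}$ lies in $\Gamma$. If $G$ had more than $n^{1+\varepsilon}$ edges then, deleting vertices of degree below $n^{\varepsilon}$ one at a time, one is left with a nonempty subgraph of minimum degree at least $n^{\varepsilon}$ (fewer than $n\cdot n^{\varepsilon}$ edges are removed). A greedy walk in that subgraph produces at least $n(n^{\varepsilon}-k)^{k}\ge 2^{-k}n^{1+k\varepsilon}$ paths of length $k$ with distinct vertices, for any fixed $k$ and all large $n$. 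Here $k=k(\varepsilon)=\lceil 2/\varepsilon\rceil$ is chosen so that $k\varepsilon>1$.

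Next I would attach to each such path $p_0,p_1,\dots,p_k$ its edge vectors $z_i=\overrightarrow{p_{i-1}p_i}\in\Gamma$, so $z_1+\dots+z_k=\overrightarrow{p_0p_k}$, and sort the paths by their ordered endpoint pair. There are fewer than $n^2$ such pairs, so some pair $(p,q)$ with $p\ne q$ is joined by at least $2^{-k}n^{k\varepsilon-1}$ paths of length $k$. Fix one of them as a reference path with edge vectors $\zeta_1,\dots,\zeta_k\in\Gamma$. For any other such path, $z_1+\dots+z_k=\zeta_1+\dots+\zeta_k$, and dividing the rearrangement $z_1+\dots+z_k-\zeta_1-\dots-\zeta_{k-1}=\zeta_k$ through by $\zeta_k\in\Gamma$ produces a point of $\Gamma^{2k-1}$ solving an equation of the form $\sum_{i=1}^{2k-1}a_iy_i=1$ with each $a_i=\pm 1$. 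A length-$k$ path starting at $p$ is determined by the tuple $(z_1,\dots,z_k)$, and the coordinates coming from the reference path are fixed, so distinct paths give distinct solutions; hence Theorem~\ref{thm:subspace} bounds by $A(2k-1,r)$ the number of paths between $(p,q)$ that yield a \emph{nondegenerate} solution.

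For the quantitative step I would observe that the exponent in $A(2k-1,r)\le (8(2k-1))^{4(2k-1)^4((2k-1)+(2k-1)r+1)}$ is linear in $r$ for fixed $k$, so $A(2k-1,r)=\exp(O_k(r))$; under the hypothesis $r<c\log n$ this is at most $n^{C(k)c}$ for an explicit $C(k)$, which falls below $n^{\varepsilon/2}$ once $c=c(\varepsilon)$ is chosen small enough. If every path between $(p,q)$ gave a nondegenerate solution we would then have $2^{-k}n^{k\varepsilon-1}\le A(2k-1,r)<n^{\varepsilon/2}$, which is false for $n>n_0(\varepsilon)$ since $k\varepsilon>1$; that contradiction would finish the proof. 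The gap is exactly that not every path need give a nondegenerate solution.

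That gap is where the real work lies, and I expect the handling of degenerate paths to be the main obstacle. A degenerate solution means some proper nonempty subsum of $z_1+\dots+z_k-\zeta_1-\dots-\zeta_{k-1}$ vanishes, i.e.\ $\sum_{i\in S_+}z_i=\sum_{j\in S_-}\zeta_j$ for sets $S_+\subseteq\{1,\dots,k\}$, $S_-\subseteq\{1,\dots,k-1\}$, neither both empty nor both full; there are at most $2^{2k}$ such patterns, and for each the right side is a fixed number $v$. When $v\ne 0$ one divides the identity $\sum_{i\in S_+}z_i=v$ by a fixed $\zeta_j$ with $j\in S_-$, landing again in the setting of Theorem~\ref{thm:subspace} with fewer than $2k-1$ terms, which pins down the edges indexed by $S_+$ up to at most $A(2k-2,r)$ choices, while the complementary edges (whose sum $\sum_{i\notin S_+}z_i$ is then also fixed) are counted by a length-$(k-|S_+|)$ instance of the same problem; when $v=0$ one splits off the corresponding shorter sub-path directly. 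This calls for an induction on $k$ showing that the number of length-$k$ paths between a fixed pair with directions in $\Gamma$ is at most some $B(k,r)=\exp(O_k(r))$, built from $A(2k-1,r)$ for the nondegenerate part and $2^{O(k)}$ terms of the form $B(k',r)$ or $A(k',r)$ with $k'$ strictly smaller. Since the number of cases is only $2^{O(k)}$ and each contributes a bound of the same $\exp(O_k(r))=n^{O_k(c)}$ shape, the induction closes with $c=c(\varepsilon)$ small enough and $n_0=n_0(\varepsilon)$ large enough, and replacing $A(2k-1,r)$ by $B(k,r)$ in the comparison above completes the argument.
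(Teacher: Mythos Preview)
Your overall plan---high-minimum-degree subgraph, greedy path count, pigeonhole on endpoint pairs, then Theorem~\ref{thm:subspace}---is exactly the paper's. One simplification you missed: the reference path is unnecessary. Once the pair $(p,q)$ is fixed with $a=\overrightarrow{pq}\ne 0$, the relation $z_1+\cdots+z_k=a$ is already an instance of Theorem~\ref{thm:subspace} with coefficients $a_i=1/a\in\mathbb{C}^*$ and variables $z_i\in\Gamma$; you never need $2k-1$ terms, and ``nondegenerate'' then means precisely that no subsum $\sum_{i\in S}z_i$ vanishes.

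The real problem is your degeneracy step: the inductive bound $B(k,r)=\exp(O_k(r))$ you propose on the number of length-$k$ paths between a fixed pair is false. Take $\Gamma$ the roots of unity ($r=0$), choose $m$ roots of unity $\zeta_1,\dots,\zeta_m\notin\{0,\pm1\}$, and let the point set be $\{0,1\}\cup\{\zeta_j,\zeta_j+1:1\le j\le m\}$, so $n=2m+2$. Then each $0\to\zeta_j\to\zeta_j+1\to 1$ is a length-$3$ path with distinct vertices and edge vectors $(\zeta_j,1,-\zeta_j)\in\Gamma^3$, giving $m\sim n/2$ paths between the fixed pair $(0,1)$; so $B(3,0)$ cannot be bounded independently of $n$. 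Every one of these paths sits in your ``$v=0$, $S_-=\emptyset$'' case via the non-contiguous vanishing subsum $z_1+z_3=0$. That equation is homogeneous, so Theorem~\ref{thm:subspace} is silent, and because $S_+=\{1,3\}$ is not an interval the edges in $S_+$ do not form a sub-path and there is no shorter instance to recurse on. The induction does not close.

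The paper's remedy is to impose nondegeneracy at the counting stage rather than repair it afterward. It counts only \emph{irredundant} paths, meaning $\sum_{i\in I}\overrightarrow{p_{i-1}p_i}\ne 0$ for every nonempty $I$---a strictly stronger condition than ``distinct vertices,'' which forbids only contiguous vanishing subsums. The greedy lower bound still goes through, with at most $2^{\ell}-1$ forbidden continuations after $\ell$ edges have been laid down (one for each nonempty subset of those edges), yielding at least $n^{1/2+(k+1/2)\varepsilon}/2^{2k+1}$ irredundant $P_k$'s. After pigeonholing on $(v,w)$, every irredundant path is by construction a nondegenerate solution of $(1/a)z_1+\cdots+(1/a)z_k=1$, so Theorem~\ref{thm:subspace} gives the upper bound $A(k,r)$ directly, with no case analysis or induction needed.
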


We will prove this theorem in the next section and in Section~\ref{sec:lower} we 
will show that the lower bound of Erd\H{o}s satisfies the hypotheses of this 
theorem.

\section{Proof of the main result}
The proof of Theorem~\ref{thm:main} is quite similar to the proof of Theorem~2.2 
in \cite{Schw12a}.  The main difference is that we use the subspace theorem 
instead of Mann's result to get an upper bound for paths in the unit distance 
graph.

Suppose $G=G(V,E)$ is a graph on $v(G)=n$ vertices and
$e(G)=cn^{1+\alpha}$ edges.  We denote the minimum degree in $G$
by $\delta(G)$.  The following lemma shows that we can remove low degree 
vertices from our graph without greatly affecting the number of
edges. 
\begin{lemma}
\label{lem:minDeg}
Let $G$ be as above.  Then $G$ contains a subgraph $H$ with
$e(H)=(c/2)n^{1+\alpha}$ edges such that $\delta(H)\ge (c/2)n^{\alpha}$.
\end{lemma}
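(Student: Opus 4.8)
The plan is to repeatedly delete vertices of low degree and show the process terminates before too many edges are lost. Concretely, I would define a deletion process: as long as the current graph has a vertex of degree less than $(c/2)n^{\alpha}$, delete one such vertex (and all incident edges). The resulting graph $H$ is an induced subgraph of $G$, and by construction $\delta(H) \ge (c/2)n^{\alpha}$ once the process halts — unless $H$ is empty, which I will rule out by the edge count.

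The key step is bounding the total number of edges removed. Each deletion step removes at most $(c/2)n^{\alpha}$ edges, since the deleted vertex had degree less than that in the \emph{current} graph. The process deletes at most $n$ vertices (one per step, and there are only $n$ vertices in $G$), so the total number of edges removed is at most $n \cdot (c/2)n^{\alpha} = (c/2)n^{1+\alpha}$. Since $G$ started with $cn^{1+\alpha}$ edges, the final graph $H$ satisfies $e(H) \ge cn^{1+\alpha} - (c/2)n^{1+\alpha} = (c/2)n^{1+\alpha}$. In particular $e(H) > 0$, so $H$ is nonempty and genuinely has minimum degree at least $(c/2)n^{\alpha}$. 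If one wants exactly $e(H) = (c/2)n^{1+\alpha}$ as stated, one can simply delete a few more edges arbitrarily at the end (this only helps the minimum degree, or one phrases the conclusion as ``at least''); I would note that the inequality version is what is actually used.

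There is no real obstacle here — this is the standard minimum-degree-cleaning argument. The only point requiring a word of care is that the degree threshold $(c/2)n^{\alpha}$ is fixed in terms of the \emph{original} $n$, not the size of the shrinking graph, which is exactly what makes the ``at most $n$ steps times $(c/2)n^{\alpha}$ edges per step'' bookkeeping go through cleanly. I would also implicitly assume $n$ is large enough (or $c$ small enough) that $(c/2)n^{\alpha} \ge 1$ so the statement is not vacuous, though this is automatic in the regime where the lemma is applied.
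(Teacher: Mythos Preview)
Your proposal is correct and essentially identical to the paper's proof: remove vertices of degree below $(c/2)n^{\alpha}$, observe that at most $n$ such removals each cost fewer than $(c/2)n^{\alpha}$ edges, hence at least $(c/2)n^{1+\alpha}$ edges remain and the surviving graph has the required minimum degree. One small slip: deleting extra edges at the end would \emph{decrease}, not help, the minimum degree---but as you correctly note, the inequality $e(H)\ge(c/2)n^{1+\alpha}$ is what is actually proved (in the paper too) and what is used downstream.
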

\begin{proof}
We remove vertices from $G$ of degree less than $(c/2)n^{\alpha}$.
Then, the resulting subgraph $H$ has $\delta(H)\ge (c/2)n^{\alpha}$ and we
removed fewer than $(c/2)n^{1+\alpha}$ edges so $H$ contains more than $(c/2)n^{1+\alpha}$ edges.
\end{proof}
Note that the subgraph $H$ constructed above contains at least 
$v(H)=\sqrt{c}n^{1/2+\alpha/2}$ vertices.

Suppose we are given a path on $k$ edges $P_k=p_0p_1\dots p_k$.  We call this
path \emph{irredundant} if \[\sum_{i\in I}\overrightarrow{p_ip_{i+1}}\ne 0\] for
every $\emptyset\ne I \subset \{0,1,\dots,k-1\}$.

\begin{proof}[Proof of Theorem~\ref{thm:main}]
Let $G$ be the graph with the $n$ points in the plane as vertices and
the unit distances with $z$ coming from $\Gamma$ as edges.  Suppose there are 
$n^{1+\varepsilon}$ such distances.  Then $e(G)=n^{1+\varepsilon}$.  We will 
show that we can take $\varepsilon$ as small as we like.  We will count the 
number of irredundant paths $P_k$ in $G$, for a fixed $k$ that we will choose 
later.  By Lemma~\ref{lem:minDeg} we can assume that $e(G)\ge 
(1/2)n^{1+\varepsilon}, v(G)\ge n^{1/2+\varepsilon/2}$ and $\delta(G)\ge 
(1/2)n^{\varepsilon}$.

The number of irredundant paths $P_k$ starting at any vertex $v$ is at
least \[N\ge\prod_{\ell=0}^{k-1}(\delta(G)-2^{\ell}+1) \ge 
\frac{n^{k\varepsilon}}{2^{2k}}.\] The first inequality is true since if we have
constructed a subpath $P_\ell$ of $P_k$, then at most $2^{\ell}-1$ of the at 
least $\delta(G)$ possible continuations are forbidden.  In the second 
inequality we have assumed that $2^k \le (1/2)n^{\varepsilon}$, which is true as 
long as $k < \varepsilon\log n/\log 2 - 1$ (we will show that this holds at the 
end of the proof).  Thus the total number of irredundant paths $P_k$ is at least 
$Nn^{1/2+\varepsilon/2}/2\ge n^{1/2+(k+1/2)\varepsilon}/2^{2k+1}$.
It follows that there are two vertices $v$ and $w$ with at least
$Nn^{1/2+\varepsilon/2}/n^2 \ge n^{(k+1/2)\varepsilon-3/2}/4^{k}$
irredundant paths $P_k$ between them.  We will call the set of these paths 
$\mathcal{P}_{vw}$, so that we have \[|\mathcal{P}_{vw}|\ge 
\frac{n^{(k+1/2)\varepsilon-3/2}}{4^{k}}.\]

Given $P_k\in\mathcal{P}_{vw}$, $P_k=p_0p_1\dots p_k$, consider the $k$-tuple
$(z_1, \dots, z_k)$ where $z_i$ is the complex number in the
direction from $p_{i-1}$ to $p_i$, i.e.
$z_i=z(p_{i-1},p_i)=\overrightarrow{p_{i-1}p_i}$.  Let $a=z(v,w)$.  Then 
$z_1+z_2+\dots+z_k=a$.  Since the path is irredundant no subsum on the left 
vanishes.  So $P_k$ corresponds to a nondegenerate solution of 
Equation~\eqref{eq:sum} with $a_i=1/a$ for $i=1,2,\dots,k$.  Thus, by 
Theorem~\ref{thm:subspace}, \[|\mathcal{P}_{vw}|\le(8k)^{4k^4(k+kr+1)}.\]
Putting these inequalities together and taking logarithms we get 
\begin{eqnarray*}
   ((k+1/2)\varepsilon-3/2)\log n &\le& k\log 4 + 4k^4(k+kr+1)\log(8k)\\
                                  &\le& 5rk^5\log k,
\end{eqnarray*}
where the last inequality holds for large $k$.  From this we get
\begin{equation}
   \label{eq:ineq}
   \varepsilon\le \frac{5rk^5\log k}{(k+1/2)\log n}+\frac{3}{2(k+1/2)} \le 
   \frac{5rk^4\log k}{\log n} + \frac{3}{2k}.
\end{equation}

We consider the expression on the right hand side as a function of $k$.  
Optimizing this function we get \[k \ge \exp\biggl((1/5)W(5c_2\log n/r)\biggr)\] 
for some constant $c_2>0$ where $W$ is the positive real-valued function 
satisfying $x=W(x)e^{W(x)}$.  This function is called the Lambert W function and 
was first studied by J.H.~Lambert in 1758 \cite{Lamb58}.  The following 
asymptotic expression is due to N.G.~de~Bruijn \cite{Brui61}:  
\[W(x)=\log(x)-\log\log(x)+O\bigl(\frac{\log\log\log x}{\log\log x}\bigr).\]
We don't require this much accuracy.  One can easily check, and we will just use 
the fact, that $(1/2)\log x\le W(x)\le \log x$ for $x\ge e$.

Then we can take \[c'\biggl(\frac{\log n}{r}\biggr)^{1/5} \le k \le 
c''\biggl(\frac{\log n}{r}\biggr)^{1/5}\] for some constants $c',c''>0$.


For any $\varepsilon > 0$ there is a constant $c>0$ such that if $r+1\le c\log 
n$ then the inequality in \eqref{eq:ineq} holds for large $n$.  When counting 
$P_k$'s we made the assumption that $k\le \varepsilon\log n/\log 2-1$.  Checking 
the above values of $k$ and $f$ we see that this holds for large $n$.

This completes the proof.
\end{proof}

\section{Analysis of \texorpdfstring{Erd\H{o}s'}{Erdos'} lower bound}
\label{sec:lower}

It would be interesting to analyze the possible group structure of unit 
distances from a maximal set of points.  We will now show that the lower bound 
configuration for the unit distance problem given by Erd\H{o}s satisfies the 
hypotheses of Theorem~\ref{thm:main}.  Matou\v{s}ek has given a very in-depth 
account of Erd\H{o}s' lower bound and we will follow that here \cite{Mato02}.

We require the following number theoretic functions: \[\pi_{d,a}(x) = 
   \sum_{\substack{p\le x \\ p\equiv a(d)}} 1,\quad \vartheta_{d,a}(x) = 
   \sum_{\substack{p\le x \\ p\equiv a(d)}} \log p, \quad \psi_{d,a}(x) = 
\sum_{\substack{p^{\ell}\le x \\ p^{\ell}\equiv a(d)}} \log p,\] where the first 
two sums are over primes less than $x$ of the form $p=a+kd$ and the last sum  is 
over primes $p$ and positive integers $\ell$ such that $p^{\ell}=a+kd$ and 
$p^{\ell}\le x$.  These are analogues of the prime counting function and 
Chebyshev functions for arithmetic progressions.

We will use the following results regarding these functions all of which are 
well known in number theory.  For details see \cite{Iwan04} and \cite{Mont07}.
\begin{theorem}[The Prime Number Theorem for Arithmetic Progressions]
   \label{thm:pnt}
   Suppose $a$ and $d$ are positive integers such that $(a,d)=1$.  Then 
   \[\pi_{d,a}(n) = (1+o(1))\frac{1}{\varphi(d)}\cdot\frac{n}{\log n}.\]
\end{theorem}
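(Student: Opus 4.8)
The plan is to deduce the stated asymptotic $\pi_{d,a}(n)=(1+o(1))\frac{1}{\varphi(d)}\cdot\frac{n}{\log n}$ from the analytic machinery of Dirichlet $L$-functions, exactly as one derives the ordinary prime number theorem from the Riemann zeta function, but working with the full group of Dirichlet characters modulo $d$ to isolate the residue class $a$. First I would introduce the Dirichlet characters $\chi \bmod d$ and the associated $L$-functions $L(s,\chi)=\sum_{n\ge 1}\chi(n)n^{-s}$, which for $\mathrm{Re}(s)>1$ admit the Euler product $\prod_p (1-\chi(p)p^{-s})^{-1}$. Taking logarithmic derivatives gives $-\frac{L'}{L}(s,\chi)=\sum_{n\ge 1}\chi(n)\Lambda(n)n^{-s}$, and the orthogonality relations for characters, $\frac{1}{\varphi(d)}\sum_{\chi}\overline{\chi(a)}\chi(n)=\mathbf{1}[n\equiv a\ (d)]$ for $(a,d)=1$, let me write $\psi_{d,a}(x)=\sum_{n\le x,\ n\equiv a\,(d)}\Lambda(n)=\frac{1}{\varphi(d)}\sum_{\chi}\overline{\chi(a)}\,\psi(x,\chi)$ where $\psi(x,\chi)=\sum_{n\le x}\chi(n)\Lambda(n)$.

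Next I would apply Perron's formula (a truncated contour integral of $-\frac{L'}{L}(s,\chi)\frac{x^s}{s}$) to each $\psi(x,\chi)$ and shift the contour to the left of $\mathrm{Re}(s)=1$. The principal character $\chi_0$ contributes a pole of $-\frac{L'}{L}(s,\chi_0)$ at $s=1$ (since $L(s,\chi_0)$ differs from $\zeta(s)$ only by finitely many Euler factors), yielding the main term $\psi(x,\chi_0)\sim x$. For every non-principal $\chi$ the key input is that $L(s,\chi)\ne 0$ on the line $\mathrm{Re}(s)=1$, together with the standard zero-free region $\sigma>1-c/\log(|t|+2)$; this forces $\psi(x,\chi)=o(x)$. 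Summing, $\psi_{d,a}(x)=\frac{1}{\varphi(d)}x+o(x)$, and then partial summation converts the weighted count $\psi_{d,a}$ (equivalently $\vartheta_{d,a}$, after discarding the negligible prime-power contributions) into the unweighted count $\pi_{d,a}(x)=(1+o(1))\frac{1}{\varphi(d)}\cdot\frac{x}{\log x}$.

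The one genuinely delicate point — the main obstacle — is establishing that $L(1+it,\chi)\ne 0$ for all real $t$ and all $\chi\bmod d$, and in particular that $L(1,\chi)\ne 0$ for real non-principal characters $\chi$. The non-vanishing for complex characters, and for real characters at points $1+it$ with $t\ne 0$, follows from the classical Hadamard--de la Vallée Poussin trick using the inequality $3+4\cos\theta+\cos 2\theta\ge 0$ applied to $\zeta(s)^3 L(s,\chi)^4 L(s,\chi^2)$-type products. The hard case is a real primitive character at $s=1$, where one must rule out a Landau--Siegel zero; the cleanest classical route is Dirichlet's own argument relating $L(1,\chi)$ to the class number of a quadratic field (or Mertens' elementary argument), which shows $L(1,\chi)>0$. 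Since this theorem is quoted as standard, I would simply cite \cite{Iwan04,Mont07} for the zero-free region and the non-vanishing, and present the deduction above as the proof sketch; the uniformity in $d$ is not needed here because $d$ is fixed, so no Siegel--Walfisz strengthening is required.
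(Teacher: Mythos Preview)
Your sketch is correct and is precisely the standard analytic route via Dirichlet $L$-functions, orthogonality of characters, Perron's formula, and non-vanishing on the line $\mathrm{Re}(s)=1$. However, the paper does not prove Theorem~\ref{thm:pnt} at all: it is stated as a well-known result and the reader is referred to \cite{Iwan04} and \cite{Mont07} for details. So there is nothing to compare against beyond noting that your outline is exactly the argument one finds in those references; in the context of this paper, a citation would have sufficed, and indeed your final paragraph already acknowledges this.
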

A simple consequence of this result is that if $a$ and $d$ are positive integers 
such that $a<d$ and $(a,d)=1$ then the $k$th prime of the form $p_i=a+k_id$ 
satisfies $p_k=(1+o(1))k\log k/\varphi(d)$.
\begin{theorem}
   \label{thm:whale}
   Suppose $a$ and $d$ are positive integers such that $(a,d)=1$.  Then 
   \[\psi_{d,a}(n)=(1+o(1))\frac{n}{\varphi(d)}.\]
\end{theorem}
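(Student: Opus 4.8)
The plan is to deduce Theorem~\ref{thm:whale} from the Prime Number Theorem for Arithmetic Progressions (Theorem~\ref{thm:pnt}) by the standard two-step comparison: first pass from $\psi_{d,a}$ to $\vartheta_{d,a}$ by discarding the contribution of the proper prime powers, and then pass from $\vartheta_{d,a}$ to $\pi_{d,a}$ by partial summation. Throughout, $a$ and $d$ are fixed, so $\varphi(d)$ is a constant and any error of the form $O(\sqrt n\log n)$ or $O(n/\log n)$ is $o(n/\varphi(d))$.

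First I would bound $\psi_{d,a}(n)-\vartheta_{d,a}(n)$, which is the sum of $\log p$ over prime powers $p^{\ell}\le n$ with $\ell\ge 2$ and $p^{\ell}\equiv a\ (d)$. Dropping the congruence condition only enlarges this quantity, and $\sum_{\ell\ge 2}\sum_{p\le n^{1/\ell}}\log p=\sum_{\ell=2}^{\lfloor\log_2 n\rfloor}\vartheta(n^{1/\ell})$. Using the crude bound $\vartheta(x)=O(x)$ (which follows from the $d=1$ case of Theorem~\ref{thm:pnt} via $\vartheta(x)\le\pi(x)\log x$, or elementarily), this is $O\bigl(\sum_{\ell\ge 2}n^{1/\ell}\bigr)=O(\sqrt n\log n)=o(n/\varphi(d))$. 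Hence it suffices to prove $\vartheta_{d,a}(n)=(1+o(1))\,n/\varphi(d)$.

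For the second step I would apply Abel summation, writing $\vartheta_{d,a}(n)=\sum_{p\le n,\ p\equiv a(d)}\log p=\pi_{d,a}(n)\log n-\int_{2}^{n}\pi_{d,a}(t)\,\frac{dt}{t}$. By Theorem~\ref{thm:pnt} the first term is $(1+o(1))\,n/\varphi(d)$, so everything comes down to showing the integral is $o(n)$. Since Theorem~\ref{thm:pnt} only controls $\pi_{d,a}(t)$ for large $t$, I would split the range at $\sqrt n$: on $[2,\sqrt n]$ the integral is at most $\pi_{d,a}(\sqrt n)\log\sqrt n=O(\sqrt n)$ (using $\pi_{d,a}\le\pi$), and on $[\sqrt n,n]$ we have $\pi_{d,a}(t)/t\le (2/\varphi(d))/\log\sqrt n$ once $n$ is large, so that part is $O(n/\log n)$. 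Thus the integral is $O(n/\log n)=o(n/\varphi(d))$, and combining the two steps gives $\psi_{d,a}(n)=(1+o(1))\,n/\varphi(d)$.

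The argument involves no real difficulty; the only point requiring care is that the $o(1)$ in Theorem~\ref{thm:pnt} is asserted only as $n\to\infty$ with $d$ fixed, so in the partial-summation step one must peel off a bounded portion of the integral (here $t\le\sqrt n$) before invoking it — the splitting above does exactly that. Alternatively, one could bypass the computation entirely and cite the $\psi$-form of the Prime Number Theorem for Arithmetic Progressions directly from \cite{Iwan04,Mont07}, since it is well known to be equivalent to the $\pi$-form stated as Theorem~\ref{thm:pnt}.
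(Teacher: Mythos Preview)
Your proposal is correct and matches the paper's approach: the paper does not give a detailed proof but simply states that Theorem~\ref{thm:whale} ``can be deduced from Theorem~\ref{thm:pnt} by partial summation,'' which is precisely what you carry out. Your two-step decomposition via $\vartheta_{d,a}$ is the natural way to execute that sketch, and in fact anticipates the content of the paper's separately stated Theorem~\ref{thm:theta}.
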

Theorem~\ref{thm:whale} can be deduced from Theorem~\ref{thm:pnt} by partial 
summation.

\begin{theorem}
   \label{thm:theta}
   Suppose $a$ and $d$ are positive integers such that $(a,d)=1$.  Then 
   \[\vartheta_{d,a}(n)=(1+o(1))\psi_{d,a}(n).\]
\end{theorem}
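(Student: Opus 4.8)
The plan is to show that $\psi_{d,a}(n)-\vartheta_{d,a}(n)$ is of strictly smaller order than $\psi_{d,a}(n)$, after which the conclusion is immediate. First I would note that $\vartheta_{d,a}$ is exactly the part of $\psi_{d,a}$ coming from the exponent $\ell=1$, so
\[
\psi_{d,a}(n)-\vartheta_{d,a}(n)=\sum_{\ell\ge 2}\ \sum_{\substack{p^{\ell}\le n\\ p^{\ell}\equiv a\,(d)}}\log p\ \le\ \sum_{\ell\ge 2}\ \sum_{p^{\ell}\le n}\log p,
\]
where the inequality holds because we have merely discarded a congruence restriction from a sum of nonnegative terms.

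Next I would estimate the right-hand side by elementary means. The sum over $\ell$ is empty unless $2\le\ell\le\log n/\log 2$. For each such $\ell$ the inner sum ranges over primes $p\le n^{1/\ell}$, of which there are at most $n^{1/\ell}$, and each contributes $\log p\le(1/\ell)\log n\le\log n$. Hence
\[
\psi_{d,a}(n)-\vartheta_{d,a}(n)\ \le\ \log n\sum_{2\le\ell\le\log n/\log 2}n^{1/\ell}\ \le\ \log n\Bigl(n^{1/2}+\tfrac{\log n}{\log 2}\,n^{1/3}\Bigr)=O\bigl(n^{1/2}\log n\bigr),
\]
with an absolute implied constant.

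Finally I would invoke Theorem~\ref{thm:whale}: with $a,d$ fixed and $(a,d)=1$ we have $\psi_{d,a}(n)=(1+o(1))n/\varphi(d)$, so in particular $\psi_{d,a}(n)\ge c\,n$ for some $c=c(d)>0$ and all large $n$; thus the bound above gives $\psi_{d,a}(n)-\vartheta_{d,a}(n)=o\bigl(\psi_{d,a}(n)\bigr)$. Dividing through,
\[
\frac{\vartheta_{d,a}(n)}{\psi_{d,a}(n)}=1-\frac{\psi_{d,a}(n)-\vartheta_{d,a}(n)}{\psi_{d,a}(n)}=1+o(1),
\]
which is exactly the assertion $\vartheta_{d,a}(n)=(1+o(1))\psi_{d,a}(n)$.

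There is no real obstacle here; it is a routine comparison of Chebyshev-type functions, and the essential content is that prime powers with exponent at least $2$ are too sparse to matter. The only point worth tracking is the distinction between the absolute constant in the $O(n^{1/2}\log n)$ bound and the $d$-dependent rate of convergence in $\psi_{d,a}(n)\sim n/\varphi(d)$; since the theorem fixes $d$, the error term is safely absorbed, but a version uniform in $d$ would require more care. One could equally well skip the comparison with the unrestricted $\psi(n)-\vartheta(n)$ and present the middle display as a self-contained estimate.
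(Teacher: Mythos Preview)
Your argument is correct and is exactly the standard textbook proof: bound $\psi_{d,a}(n)-\vartheta_{d,a}(n)$ by the unrestricted higher-power contribution $\psi(n)-\vartheta(n)=O(n^{1/2}\log n)$, then use $\psi_{d,a}(n)\sim n/\varphi(d)$ to absorb it. There is nothing to compare against in the paper itself, since the paper does not prove this theorem; it simply quotes it as a well-known fact and refers the reader to \cite{Iwan04} and \cite{Mont07} for details, so your write-up actually supplies more than the paper does.
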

The above two theorems give $\vartheta_{d,a}(n)=(1+o(1))n/\varphi(d)$.

We will also use the following fact.  For details see \cite{Nive91}.
\begin{theorem}
   \label{thm:countTwos}
   The number of integer solutions, $R(m)$, of $x^2+y^2=m$ where $m=p_1p_2\dots 
   p_r$ and the $p_i$ are distinct primes of the form $p_i=4k_i+1$ is \[R(m) = 
   2^{r+2}.\]
\end{theorem}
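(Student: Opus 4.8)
The plan is to work in the ring of Gaussian integers $\mathbb{Z}[i]$, using that it is a unique factorization domain with unit group $\{1,-1,i,-i\}$. An integer solution $(x,y)$ of $x^2+y^2=m$ is the same as a Gaussian integer $z=x+iy$ of norm $N(z)=z\bar z=x^2+y^2=m$, and $z\mapsto(x,y)$ is a bijection between Gaussian integers of norm $m$ and the solutions counted by $R(m)$. So it suffices to count elements of $\mathbb{Z}[i]$ of norm $m$.

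First I would recall Fermat's two-square theorem in the form: a rational prime $p\equiv 1\pmod 4$ factors in $\mathbb{Z}[i]$ as $p=\pi_p\bar\pi_p$ with $\pi_p$ a Gaussian prime and $\pi_p,\bar\pi_p$ non-associate. (The non-associateness is exactly where $p\equiv 1\pmod 4$ rather than $p=2$ is used, since $1+i$ and $1-i$ are associate.) Since $m=p_1\cdots p_r$ with the $p_i$ distinct such primes, this gives a factorization $m=\prod_{i=1}^r\pi_{p_i}\bar\pi_{p_i}$ into $2r$ Gaussian primes, no two of which are associate.

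Next, suppose $N(z)=m$, so $z\bar z=\prod_i\pi_{p_i}\bar\pi_{p_i}$. By unique factorization $z$ is, up to a unit, a product of a sub-multiset of $\{\pi_{p_1},\bar\pi_{p_1},\dots,\pi_{p_r},\bar\pi_{p_r}\}$, and comparing norms forces $z$ to use exactly one prime from each conjugate pair $\{\pi_{p_i},\bar\pi_{p_i}\}$: using both would make $p_i^2\mid N(z)$, using neither would make $p_i\nmid N(z)$, and $m$ is squarefree. Hence $z=u\cdot\prod_{i=1}^r\sigma_i$ with $u$ one of the $4$ units and $\sigma_i\in\{\pi_{p_i},\bar\pi_{p_i}\}$, and conversely every such product has norm $m$. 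The $2^r$ choices of the $\sigma_i$ and the $4$ units produce $4\cdot 2^r=2^{r+2}$ Gaussian integers, pairwise distinct: if two agreed, cancelling the units and applying unique factorization would force $\sigma_i=\sigma_i'$ for all $i$. Therefore $R(m)=2^{r+2}$.

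The only genuinely delicate points are the two invocations of unique factorization — that $\mathbb{Z}[i]$ is a UFD and that $\pi_p,\bar\pi_p$ are non-associate when $p\equiv 1\pmod 4$ — together with the bookkeeping that the $2^{r+2}$ candidate products are all distinct and all have norm exactly $m$; these are routine but are precisely what stops the naive count from double-counting or missing solutions. An essentially equivalent alternative avoids Gaussian integers: show via the Brahmagupta--Fibonacci identity that $R(n)/4$ is multiplicative on squarefree arguments, verify $R(p)/4=2$ for a single prime $p\equiv 1\pmod 4$ by an explicit descent, and multiply over the $r$ prime factors.
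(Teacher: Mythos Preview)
Your proof is correct and follows essentially the same route as the paper. The paper does not give a standalone proof of this theorem (it cites \cite{Nive91}), but its subsequent discussion in Section~\ref{sec:lower}---working in the ring $R\cong\mathbb{Z}[i]$, using unique factorization, noting that $(x_j,y_j)$ and $(x_j,-y_j)$ are non-associate for odd primes $p_j$, and counting $4\cdot 2^{r-1}$ distinct products---is exactly your argument in different notation.
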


The lower bound configuration consists of $n$ points in a 
$\sqrt{n}\times\sqrt{n}$ grid.  The step in the grid is chosen to be 
$1/\sqrt{m}$ where $m$ is the product of the first $r-1$ primes of the form 
$4k+1$ and $r$ is the largest number with $m\le n/4$.  We will in fact consider 
a $\sqrt{n}\times\sqrt{n}$ grid with step $1$ and then count the distances of 
length $\sqrt{m}$.  This gives a lower bound to the unit distance problem by 
scaling the point set by $1/\sqrt{m}$.

We have $4p_1p_2\dots p_{r-1}\le n < 4p_1p_2\dots p_r$.  From this the bound 
$r\ge\log n/(3\log\log n)$ is found using the prime number theorem for 
arithmetic progressions.  Distances equal to $\sqrt{m}$ in this configuration 
correspond to integer solutions of $x^2+y^2=m$.  In the lower bound, the fact 
that there are at least $2^{r-1}/16$ such distances is used.  But an upper bound 
on the number of such distances can also be found.  By 
Theorem~\ref{thm:countTwos} there are at most $4.2^{(r-1)+2}=2^{r+3}$ such 
distances from any point so we have at most $2^{r+3}n$ such distances in total.  
Erd\H{o}s' construction gives a lower bound for $r$.  If we can find an upper 
bound for $r$ then we are done as will be described below.

We will briefly explain the reason that $m$ is defined as above as this 
highlights the generators to choose for a multiplicative subgroup of 
$\mathbb{C}^*$ containing the unit distances of the configuration.  A prime $p$ 
has a unique expression, up to the order of the terms, of the form $x^2+y^2=p$ 
with $x$ and $y$ positive integers if and only if $p=2$ or $p=4k+1$ for some 
integer $k$.  The Brahmagupta-Fibonacci identity says that the product of two 
numbers, each expressible as the sum of two squares, is itself expressible as 
the sum of two squares.  Specifically
\begin{eqnarray*}
(a^2+b^2)(c^2+d^2)&=&(ac-bd)^2+(ad+bc)^2\\
                  &=&(ac+bd)^2+(ad-bc)^2.
\end{eqnarray*}
So as we multiply more primes of the form $4k+1$ together we get more 
expressions of the resulting number as a sum of two squares.  So all 
solutions of $x^2+y^2=m$ can be described in terms of the solutions of 
$x_j^2+y_j^2=p_j$.

More formally, we consider the ring $R$ of points in $\mathbb{Z}^2$ with 
addition defined coordinate-wise, so $(a,b)+(c,d)=(a+c,b+d)$, and multiplication 
defined as follows $(a,b)\cdot(c,d)=(ac-bd,ad+bc)$.  One can check that $R$ is 
actually a ring and is in fact isomorphic to the Gaussian integers 
$\mathbb{Z}[i]$ since the operations correspond to complex addition and complex 
multiplication.  But $\mathbb{Z}[i]$ is a unique factorization domain so $R$ is 
also a unique factorization domain.  We will consider the elements of $R$ as 
distance vectors.

In our grid we are looking for the distance $m=p_1\dots p_{r-1}$.  By 
Theorem~\ref{thm:countTwos}, the number of pairs $(x,y)\in\mathbb{Z}^2$ with 
$x^2+y^2=m$ is $R(m)=2^{r+1}$.  Suppose $x_j^2+y_j^2=p_j$.  We consider the 
point $(x_j,y_j)\in R$.  The product of these $r-1$ points $(x,y) = 
(x_1,y_1)(x_2,y_2)\dots(x_{r-1},y_{r-1})$ has magnitude 
\[|(x,y)|=|(x_1,y_1)|\dots|(x_{r-1},y_{r-1})| = \sqrt{p_1\dots p_{r-1}} = 
\sqrt{m}.\]  So this product gives a point with length $\sqrt{m}$.

Now, $R$ is a unique factorization domain.  That means that the point 
$(x,y)=(x_1,y_1)\dots(x_{r-1},y_{r-1})$ has unique factorization.  Specifically, 
in any other factorization of $(x,y)=(x_1',y_1')\dots(x_{r-1}',y_{r-1}')$ there 
is a bijection $\phi$ of the factors such that 
$(x_j,y_j)=u_j(x_{\phi(j)}',y_{\phi(j)}')$ where $u_j$ is a unit.  The units in 
$R$ correspond to the units in $\mathbb{Z}[i]$.  In the latter these are 
$1,-1,i,-i$ so in the former they are $(1,0), (-1,0), (0,1)$ and $(0,-1)$.  Two 
elements $(a,b),(c,d)\in R$ are called \emph{associates} if $(a,b)=u(c,d)$ for 
some unit $u$.  So in a unique factorization domain the factorization of an 
element is unique up to ordering and associates.  Now, since the $p_j$'s are odd 
primes we cannot have $x_j=\pm y_j$ for $1\le j\le r-1$.  One can check that 
$(x_j,y_j)$ and $(x_j,-y_j)$ are not associates and $(x_j,y_j), (x_k,y_k)$ are 
not associates for $j\ne k$.  So we have two points to choose from for each 
$p_j$, namely $(x_j,y_j)$ and $(x_j,-y_j)$, giving $2^{r-1}$ choices for 
$(x,y)$.  None of the factors are associates so these choices for $(x,y)$ are 
all distinct.  If we multiply a given $(x,y)$ by a unit then we get four 
different values.  So we get $4.2^{r-1}=2^{r+1}$ distinct points $(x,y)$ each 
with length $\sqrt{m}$.  So these give all possible required distances by 
Theorem~\ref{thm:countTwos}.  The units are torsion points of $R$ (they have 
finite multiplicative order in $R$) so they don't affect the rank.

Going back to unit distances, if we take the complex numbers 
\[z_j=m^{-1/(2r-2)}(x_j+iy_j),\qquad w_j=m^{-1/(2r-2)}(x_j-iy_j)\] for $1\le 
j\le r-1$ then these generate the multiplicative group of unit distances in the 
configuration.  Thus the unit distances come from a multiplicative subgroup of 
$\mathbb{C}^*$ of rank at most $r-1$.  So we just need to bound $r$ from above.

We do this by looking at the inequality $p_1\dots p_{r-1}\le n/4$.
Taking logarithms we get $\vartheta_{4,1}(p_{r-1})\le \log(n/4)$.  By 
Theorems~\ref{thm:whale} and \ref{thm:theta} we get 
\[\frac{p_{r-1}}{2\sqrt{2}}\le \log(n/4).\]  By the remark after 
Theorem~\ref{thm:pnt} we get 
\[\frac{(r-1)\log(r-1)}{2\sqrt{2}}\le2\sqrt{2}\log(n/4).\]  Solving for $r$ we 
get \[r\le \frac{16\log n}{\log\log n}.\]
Thus the unit distances come from a multiplicative subgroup of rank at most 
$r-1\le 16\log n/\log\log n-1$.  Since \[\frac{16\log n}{\log\log n}-1\le c\log 
n\] for large $n$ this configuration is covered by Theorem~\ref{thm:main}.

\section*{Acknowledgements}
I am very grateful to J\'ozsef Solymosi for the idea of using the subspace 
theorem for this problem and other useful discussions.  I would also like to 
thank Yann Bugeaud for making me aware of Amoroso and Viada's bound for the 
corollary of the subspace theorem giving the improved bound in 
Theorem~\ref{thm:subspace}.  Lastly I would like to thank Christian Elsholtz for 
helpful comments and corrections in the last section of the paper.

\bibliographystyle{plain}
\bibliography{refs2.bib}
\end{document}